\documentclass[12pt]{article}
\usepackage{graphicx} 
\usepackage{amsmath,amsthm,amssymb,amsfonts}
\usepackage{indentfirst}
\usepackage{geometry}
\usepackage{graphicx}
\usepackage{cite}
\usepackage{abstract}

\newtheorem{theorem}{Theorem}[section]
\newtheorem{proposition}{Proposition}[section]

\newtheorem{lemma}{Lemma}[section]

\allowdisplaybreaks[4]

\title{On the First Eigenvalue of Embedded Minimal Hypersurfaces in the Unit Sphere}
\author{Jinhong Yu }
\date{}

\begin{document}

\maketitle

\begin{abstract}
 Let $\Sigma$ be a closed embedded minimal hypersurface in $\mathbb{S}^{n+1}$. We establish an improved lower bound for the first non-zero eigenvalue of the induced Laplace-Beltrami operator on $\Sigma$. It is slightly better than the bound of Duncan-Sire-Spruck \cite{[1]}.
\end{abstract}

\section{Introduction}

A famous conjecture of Yau states that for any closed embedded minimal hypersurface $\Sigma$ in the unit sphere $\mathbb{S}^{n+1}$ the first eigenvalue $\lambda_1(\Sigma)=n$ \cite{[4]}. Choosing the linear function as test function shows that $\lambda_1(\Sigma)\le n$, so Yau's conjecture is equivalent to that the lower bound $\lambda_1(\Sigma)$ is exactly $n$. The conjecture has been verified for certain special cases: Lawson surfaces and Karcher-Pinkall-Sterling examples by Choe-Soret \cite{[7]} and isoparametric hypersurfaces by Tang-Yan \cite{[6]}. For a general minimal hypersurface, a seminal progress to the lower bound of $\lambda_1(\Sigma)$ was made by Choi-Wang \cite{[2]}, who proved $\lambda_1(\Sigma)\ge \frac{n}{2}$. In \cite{[8]} Brendle showed that equality in the Choi-Wang estimate can never happen. Recently, Duncan-Sire-Spruck proved the following important quantitative improvement:

\begin{theorem}[\cite{[1]}]
Let $\Sigma^n \subset \mathbb{S}^{n+1}$ be a closed embedded minimal hypersurface and denote $\Lambda=\max_{\Sigma}\|A\|$ where $A$ is the second fundamental form of $\Sigma$. Then there are constants
\begin{equation}
a_n \ge \frac{(n-1)n^2}{32000}\;\;and\;\; b_n \le \frac{5n^2}{216}
\end{equation}
such that
\begin{equation}\label{DSS}
    \lambda_1(\Sigma)\ge \frac{n}{2}+\frac{a_n}{\Lambda^6+b_n}.
\end{equation}
\end{theorem}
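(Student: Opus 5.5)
The plan is to rerun the Choi--Wang argument and keep track of the terms it discards. Let $f$ be a first eigenfunction, $\Delta f=-\lambda_1 f$ on $\Sigma$. Since $\Sigma$ is embedded, it separates $\mathbb{S}^{n+1}$ into two components; let $\Omega$ be one of them. Extend $f$ harmonically to $u$ on $\Omega$ with $u=f$ on $\partial\Omega=\Sigma$, and apply Reilly's formula on $\Omega$. Because $\mathrm{Ric}_{\mathbb{S}^{n+1}}=n$ and $H=0$, and after choosing the side so that the boundary term $\int_\Sigma A(\bar\nabla f,\bar\nabla f)$ has the favourable sign, Reilly's formula gives
\begin{equation*}
0=\int_\Omega |\nabla^2u|^2+n\int_\Omega|\nabla u|^2 -2\int_\Sigma u_\nu\,\Delta_\Sigma f+\int_\Sigma A(\bar\nabla f,\bar\nabla f),
\end{equation*}
and from this
\[
\lambda_1\int_\Sigma f^2 \ \ge\ \frac n2\int_\Omega |\nabla u|^2+\frac12\int_\Omega|\nabla^2u|^2 .
\]
Combined with $\int_\Omega|\nabla u|^2=\int_\Sigma f u_\nu$ and the Cauchy--Schwarz bound $\int_\Sigma fu_\nu\le \lambda_1\int_\Sigma f^2$ (Dirichlet principle), Choi--Wang discard the Hessian term and obtain $\lambda_1\ge n/2$. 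To improve on this I would keep the term $\frac12\int_\Omega|\nabla^2u|^2$ and show that it is at least a small multiple of $\int_\Sigma f^2$, namely of order $\Lambda^{-6}$.

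The key step is a quantitative lower bound for $\int_\Omega|\nabla^2 u|^2$. Suppose $\lambda_1$ is close to $n/2$. Then every inequality above is almost an equality, so $\int_\Omega|\nabla^2u|^2\le \epsilon\int_\Sigma f^2$ with $\epsilon=2\lambda_1-n$. In that case $\nabla u$ is nearly parallel in $L^2$. I would exploit this through the following comparison. On $\mathbb{S}^{n+1}$, parallel-like fields are obstructed by the Bochner identity $\Delta|\nabla u|^2=2|\nabla^2u|^2+2n|\nabla u|^2$, so $|\nabla u|^2$ is strongly subharmonic. Integrating this identity against a cutoff near the boundary relates the boundary flux $\int_\Sigma \partial_\nu|\nabla u|^2$ to the interior quantities. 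The boundary flux involves $\nabla^2u(\nu,\cdot)$, and on $\Sigma$ it can be written as $u_\nu\Delta_\Sigma f$ plus terms involving $A$. Hence $\Lambda$ enters through $A(\bar\nabla f,\bar\nabla f)$ and $|A|u_\nu^2$.

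To control the boundary terms by interior Hessian norms I would use a trace inequality on a collar of $\Sigma$. The embedded hypersurface has a tubular neighbourhood of width $\gtrsim 1/\Lambda$ inside $\Omega$, by a one-sided curvature bound together with minimality and embeddedness; this is the standard half-ball or normal-exponential argument. Such a trace estimate costs powers of $\Lambda$. The three factors are one from the collar width, one from the curvature in the trace, and one from squaring to pass from a Hessian norm to a gradient norm, and together they should produce $\Lambda^6$. Combining, I expect an inequality of the form $(\lambda_1-\frac n2)(\Lambda^6+b_n)\ge a_n$ after optimizing the collar width. Tracking the constants would then give $a_n\sim (n-1)n^2/32000$. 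The factor $n-1$ should enter through the trace-free part of the Hessian: the near-equality case forces $\nabla^2 u\approx c\,g$, and this is inconsistent with harmonicity unless $c=0$.

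The main obstacle is the collar and trace step: the interior radius of $\Omega$ near $\Sigma$ must be bounded below purely in terms of $\Lambda$, with no constant depending on the geometry of $\Omega$. For this I would first try the focal-distance estimate for the normal exponential map: focal points occur at distance at least $\operatorname{arccot}\Lambda\ge c/\Lambda$. I would then use embeddedness to rule out self-intersections of the normal exponential map within that distance, following the argument of Duncan--Sire--Spruck.
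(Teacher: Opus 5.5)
The paper does not prove this statement; it quotes it from \cite{[1]}. Within the paper it is in fact a corollary of Theorem~\ref{main theorem}. If $\Lambda^2<n$ then $\Sigma$ is totally geodesic and $\lambda_1=n$. If $\Lambda^2\ge n$ then $(\sqrt{\Lambda^2+2n}+\Lambda)^2\le(1+\sqrt3)^2\Lambda^2$ and $\Lambda^6\ge n^2\Lambda^2$, so $\frac{n^2}{27(\sqrt{\Lambda^2+2n}+\Lambda)^2}$ dominates $\frac{(n-1)n^2}{32000(\Lambda^6+5n^2/216)}$. Measured against the argument of Section~2, your proposal has a genuine gap.

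Your opening step is right but mis-normalised. With $\Delta u=0$, $H=0$ and the favourable side, Reilly's formula gives exactly \eqref{eq7}: $(2\lambda_1-n)\int_{\Omega}|\nabla u|^2\ge\int_{\Omega}|\nabla^2u|^2$ (here $\Omega=M_1$). So the target is $\int_\Omega|\nabla^2u|^2\ge c(n,\Lambda)\int_\Omega|\nabla u|^2$, not a multiple of $\int_\Sigma f^2$. Your inequality $\int_\Sigma fu_\nu\le\lambda_1\int_\Sigma f^2$ compares the Dirichlet-to-Neumann quotient of $f$ with $\lambda_1$. That is not a Cauchy--Schwarz or Dirichlet-principle fact, it is unjustified, and it is not needed.

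More seriously, the Hessian lower bound is the whole content of the theorem, and it is never proved. The exponent $6$ comes from counting \emph{three factors}, and $a_n,b_n$ are asserted rather than derived. The route you sketch would not supply the bound either. You rightly sense that the obstruction is the Bochner identity $\Delta|\nabla u|^2=2|\nabla^2u|^2+2n|\nabla u|^2$. However, integrating it against a weight that is nonzero on $\Sigma$ and rewriting the flux $\int_\Sigma\partial_\nu|\nabla u|^2$ through $\Delta_\Sigma f$ and $A$ is, for the weight $1$, literally Reilly's formula. It returns \eqref{eq7} and nothing more. Trace inequalities for the $A$-terms only bound boundary contributions from above and cannot force $\nabla^2u$ to be large.

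Your $(n-1)$ heuristic is also empty. $\nabla^2u$ is trace-free for every harmonic $u$, and near-equality in \eqref{eq7} means $\nabla^2u\approx 0$, which only the curvature term $2n|\nabla u|^2$ rules out. Finally, the collar is not where the difficulty lies: the paper uses it as a standing fact, and all the work goes into what is proved on it.

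Two ideas are missing.

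\emph{First}, use a weight that vanishes on $\Sigma$, namely $\min(d,s)$ with $d$ the distance to $\Sigma$. Set $\eta(t)=\int_{M^t}|\nabla u|^2$. Integrating Bochner over $M^t$ for $t\in[0,s]$ gives $n\int_0^s\eta(t)\,dt\le\|\nabla^2u\|_{L^2(M_1)}\|\nabla u\|_{L^2(M_1\setminus M^s)}$, and no boundary term appears.

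\emph{Second}, and decisively, you need a quantitative statement that the Dirichlet energy does not concentrate in a thin layer at $\Sigma$; otherwise $\int_0^s\eta$ could be negligible. This does not come from the focal distance. It comes from the mean-curvature bound $0\le H_{\Sigma^t}\le 2\Lambda$ for the parallel hypersurfaces, $t\le\arctan\frac1{2\Lambda}$ (see \eqref{H}). Together with Bochner and \eqref{eq7}, this gives $\eta''+2\Lambda\eta'\le 4\lambda_1\eta(0)$ and $\eta''\ge 0$. Hence $\eta'(t)\ge -2K\eta(0)$ with $K=\Lambda+\sqrt{\Lambda^2+2\lambda_1}$ (Lemma~\ref{p2.1}).

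This yields both $\eta(s)\ge(1-2Ks)\eta(0)$ and $\int_{M_1\setminus M^s}|\nabla u|^2\le 2Ks\,\eta(0)$. Inserting these and taking $s=1/(3K)$ gives $\int_{M_1}|\nabla^2u|^2\ge\frac{2n^2}{27K^2}\int_{M_1}|\nabla u|^2$ (Proposition~\ref{p2.2}). The loss is only of order $\Lambda^{-2}$, which is stronger than the statement. So the exponent $6$ is not intrinsic to the problem, and there is no reason to aim for it.
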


In \cite{[10]}, Jimnez-Chinchay-Zhou give another improvement
\begin{equation}\label{JCZ}
\lambda_1(\Sigma)\ge \frac{n}{2}+\frac{n\left(n+1 \right)}{32\left(12\Lambda+n+11 \right)^2+8}.
\end{equation}
Other estimates are obtained by Zhao in \cite{[9]} when $n = 2$. In this paper, we present a new improvement of Choi-Wang result.

\begin{theorem}\label{main theorem}
Let $\Sigma^n \subset \mathbb{S}^{n+1}$ be any closed embedded minimal hypersurface. Then
\begin{equation}\label{Yu estimate}
\lambda_1(\Sigma) \ge \frac{n}{2}+\frac{n^2}{27 \left(\sqrt{\Lambda^2+2n}+\Lambda\right)^2}.
\end{equation}
\end{theorem}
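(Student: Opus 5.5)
We follow the Choi--Wang scheme and track the error term quantitatively, as Duncan--Sire--Spruck do, but we choose the auxiliary comparison region more carefully. Let $f$ be a first eigenfunction, $\Delta f+\lambda_1 f=0$ on $\Sigma$. Since $\Sigma$ is embedded, it separates $\mathbb{S}^{n+1}$ into two components; let $\Omega$ be one of them. Solve the Dirichlet/Neumann problem
\[
\Delta u=0 \ \text{in } \Omega,\qquad u=f \ \text{on } \Sigma .
\]
Apply Reilly's formula on $\Omega$, using $\mathrm{Ric}=n$ and $H=0$:
\begin{equation*}
0=\int_\Omega \bigl(|\nabla^2 u|^2+n|\nabla u|^2\bigr)+\int_\Sigma \bigl(2u_\nu\Delta f+A(\nabla f,\nabla f)\bigr).
\end{equation*}
Choose the side $\Omega$ for which $\int_\Sigma A(\nabla f,\nabla f)\ge 0$, and drop the Hessian term. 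This gives the Choi--Wang inequality
\[
n\int_\Omega|\nabla u|^2\le 2\lambda_1\int_\Sigma f u_\nu=2\lambda_1\int_\Omega|\nabla u|^2,
\]
that is, $\lambda_1\ge n/2$. The improvement must come from quantifying the discarded term $\int_\Omega|\nabla^2u|^2$ from below by a fixed multiple of $\int_\Omega|\nabla u|^2$.

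\textbf{Step 1.} Bound the Hessian below near $\Sigma$. Write $\nabla u=\nabla f+u_\nu\nu$ on $\Sigma$. Since $u$ is harmonic, $\nabla^2u(\nu,\nu)=-\Delta_\Sigma f-\langle\nabla u,\vec H\rangle=\lambda_1 f$ (using $H=0$). The mixed components are $\nabla^2u(e_i,\nu)=(u_\nu)_i+A(\nabla f,e_i)$. Hence $|\nabla^2 u|^2$ on $\Sigma$ controls tangential derivatives of $u_\nu$ up to an error of size $\Lambda|\nabla f|$.

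\textbf{Step 2.} Propagate inward. Work in a tubular neighbourhood $\Omega_t=\{x\in\Omega:\ d(x,\Sigma)<t\}$. Its width is controlled by $\Lambda$: the normal exponential map is injective up to a distance comparable to $\operatorname{arccot}\Lambda$ on the focal side, and embeddedness gives the remaining control via the half-space/maximum principle argument used by Duncan--Sire--Spruck. On $\Omega_t$, use Bochner's identity
\[
\tfrac12\Delta|\nabla u|^2=|\nabla^2u|^2+n|\nabla u|^2 .
\]
Test it against the distance function $\rho=d(\cdot,\Sigma)$, or better a function $\phi(\rho)$. Integration by parts then produces
\[
\int_\Omega|\nabla^2u|^2\ \ge\ c(t)\int_\Sigma|\nabla u|^2-\dots,
\]
where the boundary term involves $\partial_\nu|\nabla u|^2=2\nabla^2u(\nabla u,\nu)$. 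Combining this with a Kato-type inequality $|\nabla^2u|^2\ge \frac{n+1}{n}|\nabla|\nabla u||^2$, valid for harmonic $u$, one gets a trade-off in $t$ between a gain of order $1/t$ and the curvature loss of order $\Lambda$. The gain depends on the mean curvature of the parallel hypersurfaces, which is bounded by $\tan$-type expressions in $\Lambda$.

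\textbf{Step 3.} Optimize. One obtains a bound of the form
\[
\lambda_1-\tfrac n2\ \ge\ \frac{\alpha\, s}{1+\beta s}\qquad\text{with } s\sim t^2 .
\]
The optimal choice of $t$ solves a quadratic whose relevant root is $t\sim n/(\sqrt{\Lambda^2+2n}+\Lambda)$. This explains the shape of \eqref{Yu estimate}: note $(\sqrt{\Lambda^2+2n}-\Lambda)=2n/(\sqrt{\Lambda^2+2n}+\Lambda)$. Inserting this $t$ and simplifying the constants, which I expect to yield the factor $27$ from a cubic optimization of the form $\max_x x(1-x)^2=4/27$, gives \eqref{Yu estimate}.

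The main obstacle is Step 2. One must get a clean lower bound for $\int_\Omega|\nabla^2u|^2$ in terms of $\int_\Omega|\nabla u|^2$, or equivalently $\int_\Sigma f u_\nu$, in which the dependence on $\Lambda$ enters only through the quadratic $\rho^2+2\Lambda\rho\le 2n$-type constraint rather than through high powers as in \cite{[1]}. I would first try a Riccati comparison for the level sets of $\rho$. Concretely, choose $\phi$ solving $\phi''+\dots=0$ explicitly, so that the cut-locus issue is avoided by taking $\phi$ supported in the region where $\rho<t$.
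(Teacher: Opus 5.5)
Your setup matches the paper: Reilly's formula on the favourable side reduces everything to a lower bound $\int_\Omega|\nabla^2u|^2\ge c\int_\Omega|\nabla u|^2$. But Step 2, which you yourself flag as the obstacle, is the entire proof, and it is not supplied. Moreover, the form you aim for does not close. An inequality $\int_\Omega|\nabla^2u|^2\ge c(t)\int_\Sigma|\nabla u|^2-\dots$ that carries a boundary term $\int_\Sigma\phi\,\nabla^2u(\nabla u,\nu)$ is not usable. That term has no sign, the pointwise identities of Step 1 give no integral control of it, and you would still have to compare the flux $\int_\Sigma|\nabla u|^2$ with $\int_\Omega|\nabla u|^2$.

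The missing idea is an upper bound on the Dirichlet energy in a collar $\{\rho<s\}$. The paper sets $\eta(t)=\int_{\{\rho>t\}}|\nabla u|^2$ and computes $\eta''(t)=\int_{\{\rho>t\}}\Delta|\nabla u|^2+\int_{\Sigma^t}H_{\Sigma^t}|\nabla u|^2$.
\begin{itemize}
\item The first term is bounded \emph{above} by feeding the Reilly inequality back in: $\int_\Omega(|\nabla^2u|^2+n|\nabla u|^2)\le 2\lambda_1\eta(0)$.
\item The second is bounded by $0\le H_{\Sigma^t}\le 2\Lambda$ for $t\le\arctan\frac{1}{2\Lambda}$, quoted from Duncan--Sire--Spruck.
\end{itemize}
Together these give $\eta''+2\Lambda\eta'\le 4\lambda_1\eta(0)$. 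Integrate twice, discard $\eta(t)\ge 0$, and optimize in $t$; the relevant quadratic is $2\lambda_1t^2+2\Lambda t=1$, with root $t_0=1/K$, where $K=\Lambda+\sqrt{\Lambda^2+2\lambda_1}$. This yields $\eta'(0)\ge -2K\eta(0)$. Convexity of $\eta$ then gives $\eta(0)-\eta(s)\le 2Ks\,\eta(0)$. This is where $\sqrt{\Lambda^2+2\lambda_1}$ comes from; it is not a geometric constraint of the form $\rho^2+2\Lambda\rho\le 2n$.

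With the collar bound in hand, Bochner's identity is tested against $\min(\rho,s)$. This weight vanishes on $\Sigma$, so no boundary Hessian term appears:
\[
n\int_0^s\eta(t)\,dt\le\Bigl(\int_\Omega|\nabla^2u|^2\Bigr)^{1/2}\Bigl(\int_{\{\rho<s\}}|\nabla u|^2\Bigr)^{1/2}.
\]
Using $\eta(t)\ge(1-2Kt)\eta(0)$ on the left and the collar bound on the right gives $\int_\Omega|\nabla^2u|^2\ge n^2\eta(0)\,s(1-Ks)^2/(2K)$. Maximizing at $s=1/(3K)$ produces $4/(27K)$, which is where the $27$ comes from. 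Finally, $\lambda_1\le n$ replaces $2\lambda_1$ by $2n$ inside $K$; your proposal also omits this step. Neither your Step 1 nor the Kato inequality plays any role.

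Your heuristic width $t\sim n/(\sqrt{\Lambda^2+2n}+\Lambda)$ is off by a factor of $n$. The relevant scale is $1/(\Lambda+\sqrt{\Lambda^2+2n})$. Already for $\Lambda=\sqrt n$ and $n\ge 3$, your $t$ exceeds $\arctan(1/\Lambda)$. That lies outside the range $t\le\arctan\frac{1}{2\Lambda}$ where the collar geometry is controlled, namely smoothness of $\rho$ and the bound on $H_{\Sigma^t}$.
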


We remark that by Simons gap theorem the constant $\Lambda\ge\sqrt{n}$ unless $\Sigma$ is totally geodesic \cite{[3]}. So, in general, the error term in (\ref{JCZ}) is strictly less than $\frac{n\left(n+1\right)}{5408n^2}$ which is of 0-th order in $n$. Our estimate in (\ref{Yu estimate}) seems to be of the $1$-th order in $n$, so it is much better in the cases $\Lambda\approx\sqrt{n}$ and $n$ large. For example, (\ref{Yu estimate}) implies a trivial estimate
\begin{equation}\label{Yu estimate 2}
\lambda_1(\Sigma) \ge \frac{n}{2}+\frac{n^2}{270 \Lambda^2}.
\end{equation}
The constant $\frac{1}{270}$ is also better than the constants in (\ref{DSS}) and (\ref{JCZ}).

 \section{Proof of Theorem \ref{main theorem}}

We start with the Reilly formula for submanifolds.

Let $(M^{n+1},g)$ be a smooth orientable Riemannian manifold with boundary $\Sigma:=\partial M^{n+1}$. Let $\nu$ denote the inward normal. Let $dv_g$ denote the volume element on $(M^{n+1},g)$ and $dS_g$ denote the volume element of the induced metric on $\Sigma$. Let $A$ denote the second fundamental form of $\Sigma$ with respect to $\nu$ and $H$ denote the associated mean curvature. For a smooth function $u$, let $u_{\nu}$ denote the inward normal derivative of $u$ on $\Sigma$ and $\triangledown^{\Sigma} u$ denote the gradient of $u$ with respect to the induced metric on $\Sigma$.

\begin{lemma}[\cite{[11]}]
Assume as above. For $u\in C^2(\overline{M} )$,
   \begin{equation}
       \int_{M} \left(\Delta u\right)^2-|\nabla^2 u|^2  dv_g=\int_{M} Ric_M(\nabla u,\nabla u) dv_g-
       \int_{\Sigma} \left(2\Delta^{\Sigma}u +Hu_{\nu}\right)u_{\nu}dS_g -\int_{\Sigma} A(\nabla^{\Sigma}u,\nabla^{\Sigma}u)dS_g.
   \end{equation}
\end{lemma}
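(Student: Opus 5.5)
The plan is to integrate the Bochner formula over $M$ and then rewrite the boundary terms using the geometry of $\Sigma$. For $u\in C^2(\overline M)$ (arguing first for $u\in C^3$ and then approximating, since the final identity only involves second derivatives), the Bochner identity reads
\begin{equation*}
\tfrac12\Delta|\nabla u|^2=|\nabla^2u|^2+\langle\nabla u,\nabla\Delta u\rangle+Ric_M(\nabla u,\nabla u).
\end{equation*}
Since $\nu$ is the \emph{inward} normal, the divergence theorem takes the form $\int_M\Delta f\,dv_g=-\int_\Sigma f_\nu\,dS_g$. Integrating by parts once more gives
\begin{equation*}
\int_M\langle\nabla u,\nabla\Delta u\rangle\,dv_g=-\int_M(\Delta u)^2\,dv_g-\int_\Sigma u_\nu\,\Delta u\,dS_g .
\end{equation*}
Substituting both into the integrated Bochner identity and rearranging, I obtain
\begin{equation*}
\int_M(\Delta u)^2-|\nabla^2u|^2\,dv_g=\int_M Ric_M(\nabla u,\nabla u)\,dv_g-\int_\Sigma u_\nu\Delta u\,dS_g+\tfrac12\int_\Sigma(|\nabla u|^2)_\nu\,dS_g .
\end{equation*}

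Next I decompose the two boundary integrands along $\Sigma$, writing $\nabla u=\nabla^\Sigma u+u_\nu\nu$. For the normal derivative of $|\nabla u|^2$,
\begin{equation*}
\tfrac12(|\nabla u|^2)_\nu=\nabla^2u(\nabla u,\nu)=u_\nu\nabla^2u(\nu,\nu)+\nabla^2u(\nabla^\Sigma u,\nu).
\end{equation*}
Tracing the Hessian over an adapted frame gives the standard splitting
\begin{equation*}
\Delta u=\Delta^\Sigma u+\nabla^2u(\nu,\nu)+Hu_\nu .
\end{equation*}
For the mixed term, take a tangent field $X$ and compute $X(u_\nu)=X\langle\nabla u,\nu\rangle$. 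This yields $\nabla^2u(X,\nu)=X(u_\nu)-A(X,\nabla^\Sigma u)$. Here the signs of $A$ and $H$ are fixed, relative to the inward $\nu$, so that both displayed identities hold with exactly these signs.

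Inserting these expressions, the terms $\pm u_\nu\nabla^2u(\nu,\nu)$ cancel. Since $\Sigma$ is closed, integration by parts on $\Sigma$ gives $\int_\Sigma\langle\nabla^\Sigma u,\nabla^\Sigma u_\nu\rangle\,dS_g=-\int_\Sigma u_\nu\Delta^\Sigma u\,dS_g$. The boundary contribution therefore becomes
\begin{equation*}
-\int_\Sigma u_\nu\bigl(\Delta^\Sigma u+Hu_\nu\bigr)-\int_\Sigma u_\nu\Delta^\Sigma u-\int_\Sigma A(\nabla^\Sigma u,\nabla^\Sigma u),
\end{equation*}
which is exactly $-\int_\Sigma(2\Delta^\Sigma u+Hu_\nu)u_\nu\,dS_g-\int_\Sigma A(\nabla^\Sigma u,\nabla^\Sigma u)\,dS_g$, as claimed.

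I expect the main obstacle to be purely bookkeeping of sign conventions. The orientation of $\nu$ is inward, which flips the sign in the divergence theorem, and the signs of $A$ and $H$ must be chosen consistently with it. I would fix these conventions first by checking the formula on a Euclidean ball with $u=|x|^2$, and only then carry out the computation above.
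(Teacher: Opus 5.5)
Your proof is correct. The paper gives no proof of this lemma and simply cites Reilly; your argument is the standard derivation of Reilly's formula: integrate Bochner's formula, split $\Delta u$ and $\tfrac12(|\nabla u|^2)_\nu$ along $\Sigma$, then integrate $\langle\nabla^\Sigma u,\nabla^\Sigma u_\nu\rangle$ by parts on the closed hypersurface $\Sigma$.

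One point should be stated explicitly. $A$ and $H$ cannot be sign-normalized independently, because $H=\mathrm{tr}_\Sigma A$. Both of your boundary identities do hold under the single convention $A(X,Y)=\langle\nabla_X\nu,Y\rangle$, $H=\mathrm{tr}_\Sigma A$, with $\nu$ inward. Under this convention the unit sphere bounding a ball has $H=-n$, which is exactly what your $u=|x|^2$ check gives.
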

\hspace{\fill}

From now on, let $\Sigma$ be a minimal smooth hypersurface in $S^{n+1}$ that divides $S^{n+1}$ into two components $M_1$ and $M_2$. Denote by $\Psi$
a normalized eigenfunction corresponding to the first nonzero eigenvalue $\lambda_1(\Sigma)$, so that
$$-\Delta^{\Sigma} \Psi=\lambda_1 \Psi,\qquad \|\Psi \|_{L^2(\Sigma)}=1. $$
Let $u$ be the unique solution to
\begin{equation}\label{equ}
    \begin{cases}
        \Delta u=0,\;\;\;\;\mbox{ in } \;\;M_1,\\
        u=\Psi, \;\;\;\;\;\;\mbox{ on }\;\;\Sigma.
    \end{cases}
\end{equation}
We fix the orientation on $\Sigma$ pointing into $M_1$. We may assume that
$$-\int_{\Sigma} A(\nabla^{\Sigma}u,\nabla^{\Sigma}u)dS_g\ge 0,$$
otherwise we work on $M_2$ instead. Then by Reilly's formula and $H=0$, the solution $u$ to (\ref{equ}) satisfies
\begin{equation*}
    \begin{aligned}
        -\int_{M_1} | \nabla^2 u|^2dv_g &\ge n\int_{M_1} |\nabla u|^2dv_g-2\int_{\Sigma}u_{\nu} \Delta^{\Sigma}u dS_g\\
        &=n\int_{M_1} |\nabla u|^2dv_g+2\lambda_1\int_{\Sigma} u_{\nu}u dS_g\\
        &=n\int_{M_1} |\nabla u|^2dv_g-2\lambda_1\int_{M_1} |\nabla u|^2 dv_g.
    \end{aligned}
\end{equation*}
So,
 \begin{equation}\label{eq7}
 (2\lambda_1-n)\int_{M_1} |\nabla u|^2 dv_g\ge \int_{M_1} |\nabla^2 u|^2 dv_g.
 \end{equation}

The core in the proof is to establish a lower bound for the Hessian energy $\int_{M_1}|\nabla^2 u|^2 dv_g$.

\subsection{Preliminary energy estimate}

Let $\|A\|$ be the norm of the second fundamental form. Let $\Lambda=\sup_\Sigma\|A\|$. By Simons inequality \cite{[3]}
$$\int_{\Sigma} \| A\|^2(\|A\|^2 - n) dS_g \ge 0.$$
If $\Lambda^2<n$, then $\Sigma$ is totally geodesic, so $\lambda_1(\Sigma)=n$. For the remainder of the proof, we may assume that $\Lambda^2\ge n$.

We denote $M^{t}=\{x\in M_1 | d(x)>t\}$, where $d(x)$ is the distance of $x$ to $\Sigma$ in $M_1$. Then $\partial M^{t} = \Sigma^{t}$ is a smooth embedded hypersurface for $0\le t<\arctan \frac{1}{\Lambda}$. Let $H_{\Sigma^t}$ be the mean curvature of $\Sigma^t$ with respect to the inward normal of $M^t$. According to Lemma 3.5 in \cite{[1]}, the mean curvature has upper and lower  bounds
\begin{equation}\label{H}
    0\le H_{\Sigma^t}\le 2\Lambda,\qquad 0\le t\le\arctan{\frac{1}{2\Lambda}}.
\end{equation}

\begin{lemma}
   Let $\eta(t)=\int_{M^t} |\nabla u|^2\;dv_g$ and $C_1=2\lambda_1\eta(0)$, then $\eta(t)$ satisfies the  inequality:
   \begin{equation}\label{eq9}
       \eta^{\prime\prime}+2\Lambda \eta^{\prime}\le 2C_1.
   \end{equation}
\end{lemma}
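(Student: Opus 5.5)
Write $f=|\nabla u|^2$. By the coarea formula, and since $|\nabla d|=1$,
\[
\eta(t)=\int_t^{\infty}\!\int_{\Sigma^s} f\,dS\,ds, \qquad \eta'(t)=-\int_{\Sigma^t} f\,dS .
\]
To get $\eta''$ we differentiate $\int_{\Sigma^t} f$ in $t$, moving $\Sigma^t$ along its inward unit normal $\nu_t=\nabla d$. The first variation formula gives
\[
\frac{d}{dt}\int_{\Sigma^t} f\,dS=\int_{\Sigma^t}\bigl(\partial_{\nu_t} f - H_{\Sigma^t} f\bigr)dS,
\]
where the sign is fixed by the convention that $H_{\Sigma^t}\ge 0$ is taken with respect to the inward normal of $M^t$, so area shrinks as we move in. Therefore
\[
\eta''(t)=-\int_{\Sigma^t}\partial_{\nu_t} f\,dS+\int_{\Sigma^t}H_{\Sigma^t} f\,dS .
\]
We use the bound (\ref{H}): $0\le H_{\Sigma^t}\le 2\Lambda$. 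This gives $\int_{\Sigma^t} H_{\Sigma^t} f\le 2\Lambda\int_{\Sigma^t} f=-2\Lambda\eta'$, hence
\[
\eta''+2\Lambda\eta'\le -\int_{\Sigma^t}\partial_{\nu_t}|\nabla u|^2\,dS .
\]
This restricts the range of $t$ to $0\le t\le\arctan\frac{1}{2\Lambda}$, which is implicit in the lemma.

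It remains to show that $-\int_{\Sigma^t}\partial_{\nu_t}|\nabla u|^2\le 2C_1=4\lambda_1\eta(0)$. Since $\nu_t$ is the inward normal of $M^t$, the divergence theorem gives
\[
-\int_{\Sigma^t}\partial_{\nu_t} f=\int_{M^t}\Delta f .
\]
Bochner's formula, with $\Delta u=0$ and $Ric=n$ on $\mathbb S^{n+1}$, gives $\Delta|\nabla u|^2=2|\nabla^2u|^2+2n|\nabla u|^2\ge0$. Hence
\[
\eta''+2\Lambda\eta'\le 2\int_{M^t}|\nabla^2u|^2+2n\int_{M^t}|\nabla u|^2\le 2\int_{M_1}|\nabla^2u|^2+2n\,\eta(0),
\]
using that the integrands are nonnegative and $M^t\subset M_1$.

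Finally we apply (\ref{eq7}): $\int_{M_1}|\nabla^2u|^2\le(2\lambda_1-n)\eta(0)$. The right-hand side is then at most
\[
2(2\lambda_1-n)\eta(0)+2n\eta(0)=4\lambda_1\eta(0)=2C_1,
\]
which is exactly (\ref{eq9}).

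The delicate step is the sign bookkeeping in the variation of $\int_{\Sigma^t} f$ and in the divergence theorem with inward normals. The $H_{\Sigma^t}$ term has to carry a favorable sign, bounded above by $2\Lambda\int f$, so the orientation conventions must match those of Lemma 3.5 in \cite{[1]}. A second point is regularity: $d$ is smooth on $M_1\setminus M^{t_0}$ only for $t_0<\arctan\frac1\Lambda$, so the differentiation must stay in that range. There $\eta$ is $C^2$, since $u$ is smooth up to $\Sigma$.
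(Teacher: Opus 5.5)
Your proof is correct and follows the paper's own argument step for step. You get $\eta''$ from the first variation of $\int_{\Sigma^t}|\nabla u|^2$, absorb the mean-curvature term using $H_{\Sigma^t}\le 2\Lambda$ to produce $-2\Lambda\eta'$, and convert the normal-derivative term by the divergence theorem and Bochner's formula $\Delta|\nabla u|^2=2|\nabla^2u|^2+2n|\nabla u|^2$; then you enlarge $M^t$ to $M_1$ and apply $\int_{M_1}|\nabla^2u|^2\le(2\lambda_1-n)\eta(0)$, with the same implicit restriction to $0\le t\le\arctan\frac{1}{2\Lambda}$ and the same reliance on smoothness of $\Sigma^t$ there as the paper.
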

\begin{proof}
We compute the derivatives of $\eta(t)$:
\begin{align*}
    \eta^{\prime}(t)&= -\int_{\Sigma^t} |\nabla u|^2 dS_g,\\
    \eta^{\prime\prime}(t)&=-\left[\int_{\Sigma^t} \langle \nabla|\nabla u|^2 , \nabla d \rangle dS_g -\int_{\Sigma^t} |\nabla u|^2 H _{\Sigma^t} dS_g\right].
\end{align*}
Applying the equation (\ref{eq7}) and (\ref{H}), we obtain that
  \begin{align*}
      \eta^{\prime\prime}(t)&=\int_{M^t} \triangle |\nabla u|^2 dv_g + \int_{\Sigma^t} |\nabla u|^2 H_{\Sigma^t} dS_g\\
    &= \int_{M^t} \left(2|\nabla^2 u|^2+2n|\nabla u|^2 \right)dv_g +  \int_{\Sigma^t} |\nabla u|^2 H_{\Sigma^t}dS_g\\
    &\le 2\int_{M_1}  \left(|\nabla^2 u|^2 + n|\nabla u|^2 \right) dv_g - 2\Lambda \eta^{\prime}(t)\\
  &\le 4\lambda_1 \int_{M_1} |\nabla u|^2 dv_g - 2\Lambda \eta^{\prime}(t).
  \end{align*}
The required formula follows.
\end{proof}

\begin{lemma}\label{p2.1}
Assume as before, then, for any $0\le t \le \arctan\frac{1}{2\Lambda}$, we have 
\begin{equation}\label{eta derivative}
\eta^{\prime}(t)\ge -2\left(\Lambda+\sqrt{\Lambda^2+2\lambda_1}\right)\eta(0).
\end{equation}
\end{lemma}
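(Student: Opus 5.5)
The plan is to turn the differential inequality (\ref{eq9}) into a first-order inequality for the flux $f(t):=-\eta'(t)=\int_{\Sigma^t}|\nabla u|^2\,dS_g\ge 0$, and then combine it with the trivial constraint that $\eta$ is nonnegative and nonincreasing. Set $C_1=2\lambda_1\eta(0)$ as in the previous lemma. Then (\ref{eq9}) reads
\begin{equation*}
f'(t)\ \ge\ -2\Lambda f(t)-2C_1 ,\qquad 0\le t\le \arctan\tfrac{1}{2\Lambda}.
\end{equation*}
So $f$ cannot decay faster than the solution of the linear ODE $y'=-2\Lambda y-2C_1$. Since $\int_{t_0}^{t_0+s} f = \eta(t_0)-\eta(t_0+s)\le \eta(0)$, the flux cannot be large at a point without forcing too much energy loss shortly afterwards. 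That tension is what produces the bound.

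\textbf{Key steps.} Fix $t_0$ and write $F=f(t_0)$.
\begin{itemize}
\item Comparison. By Gr\"onwall applied to $(e^{2\Lambda t}f)'\ge -2C_1e^{2\Lambda t}$, for $\tau\ge 0$
\begin{equation*}
f(t_0+\tau)\ \ge\ \Big(F+\tfrac{C_1}{\Lambda}\Big)e^{-2\Lambda\tau}-\tfrac{C_1}{\Lambda}.
\end{equation*}
\item Integration. Integrating over $\tau\in[0,s]$ and using $\int f\le \eta(0)$ gives
\begin{equation*}
\Big(F+\tfrac{C_1}{\Lambda}\Big)\frac{1-e^{-2\Lambda s}}{2\Lambda}-\frac{C_1 s}{\Lambda}\ \le\ \eta(0).
\end{equation*}
\item Linearization. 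Replace $1-e^{-2\Lambda s}\ge 2\Lambda s-2\Lambda^2 s^2$, or work directly with the quadratic Taylor bound $f(t_0+\tau)\ge F-(2\Lambda F+2C_1)\tau$ cut off where it vanishes. This yields an inequality of the form
\begin{equation*}
Fs-(\Lambda F+C_1)s^2\ \le\ \eta(0).
\end{equation*}
\item Optimization in $s$. Choose $s=F/(2(\Lambda F+C_1))$. This gives $F^2\le 4(\Lambda F+C_1)\eta(0)=4\Lambda\eta(0)F+8\lambda_1\eta(0)^2$. Solving the quadratic in $x=F/\eta(0)$, i.e.\ $x^2-4\Lambda x-8\lambda_1\le0$, gives
\begin{equation*}
x\le 2\Lambda+2\sqrt{\Lambda^2+2\lambda_1}.
\end{equation*}
This is exactly (\ref{eta derivative}), which strongly suggests this is the intended mechanism.
\end{itemize}

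\textbf{Main obstacle.} The forward window $[t_0,t_0+s]$ must stay inside the range where (\ref{H}), and hence (\ref{eq9}), is valid. The optimal $s\le 1/(2\Lambda)$ is automatic from the formula. However, $t_0+s$ may exceed $\arctan\frac{1}{2\Lambda}$ when $t_0$ is near the end of the interval.

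\begin{itemize}
\item First attempt: note that $\Sigma^t$ stays smooth up to $\arctan\frac1\Lambda$, and check whether the mean-curvature bound in Lemma 3.5 of \cite{[1]} extends, with $2\Lambda$ still valid or only slightly worse, on the larger range. Then the window fits.
\item Fallback: run the comparison backwards in $t$ instead, integrating over $[t_0-s,t_0]$. This requires an upper-type inequality, which can be obtained by rewriting (\ref{eq9}) for $e^{2\Lambda t}\eta'$ and using $\eta(t_0-s)-\eta(t_0)\le\eta(0)$ in the same way.
\end{itemize}

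I expect this endpoint bookkeeping, rather than the algebra, to be the delicate part.
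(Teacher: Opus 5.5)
Your argument at the base point $t_0=0$ is the paper's: forward Gr\"onwall from (\ref{eq9}), a second integration, discarding $\eta\ge 0$, the bound $1-e^{-x}\ge x-\frac{x^2}{2}$, and an optimization. The paper fixes $s$ and solves for $F$; you fix $F$ and optimize $s$; the two agree at the critical value.

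The gap is the step you flag yourself: passing from $t_0=0$ to arbitrary $t_0\in[0,\arctan\frac{1}{2\Lambda}]$. Neither of your remedies works.

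\begin{itemize}
\item The first cannot give the stated constant. The bound $H_{\Sigma^t}\le 2\Lambda$ genuinely fails beyond $\arctan\frac{1}{2\Lambda}$. For the Clifford torus in $\mathbb{S}^3$ one has $\Lambda=\sqrt2$ and $H_{\Sigma^t}=2\tan 2t$, which equals $4\Lambda$ at $\tan t=\frac{1}{\Lambda}$. Any larger constant in (\ref{H}) changes (\ref{eq9}) and hence the final bound.
\item The second fails because (\ref{eq9}) is one-sided. The inequality $f'\ge -2\Lambda f-2C_1$ only says that $f$ cannot drop too fast going forward. Integrated backwards it gives $f(t_0-\tau)\le e^{2\Lambda\tau}F+\frac{C_1}{\Lambda}\left(e^{2\Lambda\tau}-1\right)$. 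This is an \emph{upper} bound on the earlier flux, so combined with $\eta(t_0-s)-\eta(t_0)\le\eta(0)$ it says nothing about how large $F$ can be.
\end{itemize}

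The missing idea is that $\eta$ is convex on this range. The computation in the proof of the previous lemma gives
\[
\eta''(t)=2\int_{M^t}\left(|\nabla^2u|^2+n|\nabla u|^2\right)dv_g+\int_{\Sigma^t}|\nabla u|^2H_{\Sigma^t}\,dS_g\ \ge\ 0,
\]
using the \emph{lower} bound $H_{\Sigma^t}\ge 0$ in (\ref{H}). Hence $f=-\eta'$ is nonincreasing and $f(t)\le f(0)$. So only the case $t_0=0$ is needed, and the window $[0,s]$ never leaves the admissible range. This is exactly how the paper concludes.

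There is also a smaller point at $t_0=0$. The admissible window is $s\le\arctan\frac{1}{2\Lambda}$, which is strictly smaller than $\frac{1}{2\Lambda}$, so checking $s\le\frac{1}{2\Lambda}$ is not enough. Moreover, your choice $s=F/(2(\Lambda F+C_1))$ exceeds
\[
s^*=\frac{1}{\Lambda+\sqrt{\Lambda^2+2\lambda_1}}
\]
precisely when $F$ exceeds the claimed bound, which is the case you are trying to exclude.

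Take $s=s^*$ instead. The left side of
\[
Fs^*(1-\Lambda s^*)-C_1(s^*)^2\le\eta(0)
\]
is increasing in $F$. Since $2\lambda_1(s^*)^2+2\Lambda s^*=1$, it equals $\eta(0)$ exactly at $F=2\left(\Lambda+\sqrt{\Lambda^2+2\lambda_1}\right)\eta(0)$. This gives the bound, provided $s^*\le\arctan\frac{1}{2\Lambda}$.
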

\begin{proof}
Multiplying both sides of (\ref{eq9}) by $e^{2\Lambda t}$ and integrate t from 0 to s, we obtain that
\begin{equation*}\label{eq10}
    \int_{0}^{s} \eta(t)^{\prime\prime} e^{2\Lambda t} dt + \int_{0}^{s} 2\Lambda \eta(t)^{\prime} e^{2\Lambda t} dt \le 2C_1 \int_{0}^{s} e^{2\Lambda t} dt.
\end{equation*}
Direct computation shows
\begin{equation*}
    e^{2\Lambda s} \eta(s)^{\prime} - \eta(0)^{\prime} \le 2\lambda_1\eta(0) \frac{e^{2\Lambda s}-1}{\Lambda}.
    \end{equation*}
So,
$$\eta(s)^{\prime} \le 2\lambda_1\eta(0) \frac{1-e^{-2\Lambda s}}{\Lambda}+e^{-2\Lambda s}\eta(0)^{\prime} .$$
Integrating once again we get, for any $0\le t\le\arctan\frac{1}{2\Lambda}$,
$$\eta(t)-\eta(0)\le \frac{2\lambda_1\eta(0)}{\Lambda}\left(t-\frac{1-e^{-2\Lambda t}}{2\Lambda}\right)+\frac{1-e^{-2\Lambda t}}{2\Lambda}\eta(0)^{\prime}.$$
Dropping the term $\eta(t)$ we have
$$\eta'(0)\ge -2\eta(0)\cdot\left(\frac{\Lambda+2\lambda_1t}{1-e^{-2\Lambda t}}-\frac{\lambda_1}{\Lambda}\right).$$
Notice that whenever $t\le\arctan\frac{1}{2\Lambda}$,
$$\frac{\Lambda+2\lambda_1t}{1-e^{-2\Lambda t}}-\frac{\lambda_1}{\Lambda}\le\frac{\Lambda+2\lambda_1t}{2\Lambda t(1-\Lambda t)}-\frac{\lambda_1}{\Lambda}
=\frac{1+2\lambda_1 t^2}{2t(1-\Lambda t)}.$$
The right hand side achieves the minimum at
$$t_0=\frac{1}{\Lambda+\sqrt{\Lambda^2+2\lambda_1}}\le\arctan\frac{1}{2\Lambda}.$$
Substituting into the formulas we have
$$\eta'(0)\ge -2\eta(0)\cdot\frac{1+2\lambda_1 t_0^2}{2t_0(1-\Lambda t_0)}=-2\left(\Lambda+\sqrt{\Lambda^2+2\lambda_1}\right)\eta(0).$$
Since$$\eta^{\prime\prime}(t)=2\int_{M^t}\left( |\nabla^2 u|^2+n|\nabla u^2| \right)\;dv_g + \int_{\Sigma^t}|\nabla u|^2 H_{\Sigma^t}\;dS_g \ge 0,$$
we have, for any $0\le t \le \arctan\frac{1}{2\Lambda}$,
$$\eta^{\prime}(t)\ge \eta^{\prime}(0)\ge -2\left( \Lambda+\sqrt{\Lambda^2+2\lambda_1}\right)\eta(0).$$
This completes the proof of the lemma.
\end{proof}

\subsection{Hessian energy estimate and the proof of Theorem \ref{main theorem}}

The key step is the following Hessian estimate.

\begin{proposition}\label{p2.2}
    Assume as before. Then
    \begin{equation}\label{eq11}
        \int_{M_1} |\nabla^2u|^2 dv_g\ge \frac{2n^2}{27\left( \sqrt{\Lambda^2+2\lambda_1}+\Lambda\right)^2 }\int_{M_1}|\nabla u|^2 dv_g.
    \end{equation}
\end{proposition}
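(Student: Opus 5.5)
The plan is to get the Hessian lower bound from the Bochner identity on $\mathbb{S}^{n+1}$, integrated against a cutoff in the distance function $d$, and to control the energy near $\Sigma$ by Lemma \ref{p2.1}. Since $u$ is harmonic and $Ric=n\,g$, we have
\[
\Delta|\nabla u|^2=2|\nabla^2u|^2+2n|\nabla u|^2 \ge 2n|\nabla u|^2 .
\]
Write $K=\Lambda+\sqrt{\Lambda^2+2\lambda_1}$. Fix $0<t\le \arctan\frac{1}{2\Lambda}$ and take the Lipschitz cutoff $\varphi=\min\{d/t,1\}$. It vanishes on $\Sigma$, equals $1$ on $M^t$, and satisfies $|\nabla\varphi|=1/t$ on the collar $M_1\setminus M^t$ and $0$ elsewhere. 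Multiplying by $\varphi$ and integrating by parts gives no boundary term, because $\varphi=0$ on $\Sigma$. Using $|\nabla|\nabla u|^2|\le 2|\nabla u|\,|\nabla^2u|$ (Kato) and Cauchy--Schwarz, this yields
\[
n\int_{M_1}\varphi|\nabla u|^2\le \int_{M_1}|\nabla\varphi|\,|\nabla u|\,|\nabla^2u|
\le \frac1t\Big(\int_{M_1\setminus M^t}|\nabla u|^2\Big)^{1/2}\Big(\int_{M_1}|\nabla^2u|^2\Big)^{1/2}.
\]
This inequality carries a factor $2$ on each side, which cancels.

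Next I would estimate both sides through $\eta$. On the left, $\int\varphi|\nabla u|^2\ge\eta(t)$. On the right, $\int_{M_1\setminus M^t}|\nabla u|^2=\eta(0)-\eta(t)$. Lemma \ref{p2.1} gives $\eta'\ge -2K\eta(0)$ on $[0,\arctan\frac1{2\Lambda}]$, hence
\[
\eta(0)-\eta(t)\le 2Kt\,\eta(0)\qquad\text{and}\qquad \eta(t)\ge(1-2Kt)\,\eta(0).
\]
Substituting and squaring yields
\[
\int_{M_1}|\nabla^2u|^2\ \ge\ \frac{n^2\,t\,(1-2Kt)^2}{2K}\,\eta(0).
\]

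Then I optimize in $t$. The function $t(1-2Kt)^2$ is maximized at $t=\frac{1}{6K}$, with value $\frac{2}{27K}$. This choice is admissible: $K\ge 2\Lambda$, so $\frac1{6K}\le\frac1{12\Lambda}\le\arctan\frac1{2\Lambda}$, using $\Lambda\ge\sqrt n\ge 1$. The result is a bound of the form $\int|\nabla^2u|^2\ge c\,\frac{n^2}{27K^2}\int|\nabla u|^2$, which has exactly the structure of (\ref{eq11}), including the $27$ coming from the cubic optimization.

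The main obstacle is matching the numerical constant $2$, rather than the $1$ obtained above. To get it, I would sharpen the two lossy steps. First, use the exact $\eta$ relation instead of the crude Kato/Cauchy--Schwarz split: write $\int_{M_1}\varphi\,\Delta|\nabla u|^2=\frac1t\int_0^t(-\eta''\text{-type boundary terms})$ via the coarea formula, and keep the $2|\nabla^2u|^2$ term on the left rather than discarding it. Second, use $\eta(t)\ge\eta(0)+t\eta'(0)$ together with the convexity of $\eta$ in a sharper form. If the constant still falls short, a quadratic cutoff $\varphi=\min\{(d/t)^2,1\}$ or a weighted Cauchy--Schwarz with an optimized parameter should recover it. The structure of the argument, namely Bochner with a cutoff, then Lemma \ref{p2.1} for the collar energy, then optimization in $t$, is what I would commit to.
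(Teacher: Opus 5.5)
Your route is the paper's route. Testing $\Delta|\nabla u|^2=2|\nabla^2u|^2+2n|\nabla u|^2$ against $\min\{d/t,1\}$ is, up to the factor $1/t$, exactly the paper's $\int_0^s\int_{M^\tau}\Delta|\nabla u|^2\,dv_g\,d\tau$. Your Cauchy--Schwarz over the collar is the same as there, and so is your use of Lemma~\ref{p2.1} to get $\eta(0)-\eta(t)\le 2Kt\,\eta(0)$, with $K=\Lambda+\sqrt{\Lambda^2+2\lambda_1}$.

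However, as written you only obtain $\frac{n^2}{27K^2}$, half the constant in (\ref{eq11}), so the proposition is not proved. Your diagnosis of the loss is also wrong. The Kato/Cauchy--Schwarz split costs nothing relative to the paper, which also discards the Hessian term on the left, and no quadratic cutoff or weighted Cauchy--Schwarz is needed. The whole factor $2$ is lost in the single step $\int_{M_1}\varphi|\nabla u|^2\ge\eta(t)$, where you use the lower bound on $\eta$ only at the endpoint $\tau=t$.

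Instead, use the exact identity from Fubini, since $\int_0^t\chi_{\{d>\tau\}}\,d\tau=\min\{d,t\}$, together with $\eta(\tau)\ge(1-2K\tau)\eta(0)$ for every $\tau\in[0,t]$:
\[
\int_{M_1}\varphi\,|\nabla u|^2\,dv_g=\frac1t\int_0^t\eta(\tau)\,d\tau\ \ge\ \frac1t\int_0^t(1-2K\tau)\,\eta(0)\,d\tau=(1-Kt)\,\eta(0).
\]
With this, your inequality becomes
\[
\int_{M_1}|\nabla^2u|^2\,dv_g\ \ge\ \frac{n^2\,t\,(1-Kt)^2}{2K}\,\eta(0).
\]
The function $t(1-Kt)^2$ is maximized at $t=\frac{1}{3K}$ with value $\frac{4}{27K}$, which gives exactly $\frac{2n^2}{27K^2}\,\eta(0)$. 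This is precisely what the paper does via $\int_0^s\eta\ge\eta(0)(s-Ks^2)$.

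The choice $t=\frac1{3K}\le\frac1{6\Lambda}$ remains admissible. Indeed, $\arctan x\ge x-\frac{x^3}{3}$ yields $\arctan\frac1{2\Lambda}\ge\frac1{6\Lambda}$ whenever $\Lambda^2\ge\frac18$.
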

\begin{proof}
Integrating the formula (\ref{eta derivative}) we have
\begin{equation}\label{eq12}
    \eta(s)\ge \left[1-2( \sqrt{\Lambda^2+2\lambda_1}+\Lambda)s\right]\eta(0).
\end{equation}
A simple calculation
\begin{align*}
2\int_0^s  \int_{M^t} |\nabla^2 u|^2 dv_g\;dt+2n\int_0^s \int_{M^t}|\nabla u|^2\;dv_g \;dt
    &=\int_0^s \int_{M^t}\triangle |\nabla u|^2 \;dv_g\;dt\\
    &=-\int_0^s\int_{\Sigma^t} \langle \nabla |\nabla u|^2 \;,\;\nabla d  \rangle\;dS_g\;dt\\
    &=-\int_{M_1 \setminus M^s}  \; \langle\; \nabla|\nabla u|^2\; , \nabla d\;  \rangle\;dv_g
\end{align*}
yields
\begin{equation*}\label{eq13}
     \int_0^s  \int_{M^t} |\nabla^2 u|^2 dv_g\;dt+n\int_0^s \eta(t) \;dt=-\int_{M_1 \setminus M^s}  \; \langle\; \langle \nabla\nabla u\;,\;\nabla u \rangle \; , \nabla d\;  \rangle\;dv_g.
\end{equation*}
Dropping the first term on the left hand side and applying the Cauchy-Schwarz inequality, we obtain that
\begin{equation*}
   n\int_0^s\eta(t) \;dt \le \left( \int_{M_1\setminus M^s}  |\nabla^2 u|^2 \;dv_g \right)^{\frac12}\left(\int_{M_1\setminus M^s} |\nabla u|^2\; dv_g \right)^{\frac12}.
\end{equation*}
Combining with the inequality (\ref{eq12}), we have the following bound
\begin{align*}
    n\eta(0)\left[s-\left(\sqrt{\Lambda^2+2\lambda_1}+\Lambda\right)s^2\right]&
    \le n\int_0^s\eta(t) \;dt\\
    &\le \left( \int_{M_1\setminus M^s}  |\nabla^2 u|^2 \;dv_g \right)^{\frac12}\left(\int_{M_1\setminus M^s} |\nabla u|^2\; dv_g \right)^{\frac12}\\
    &\le \left( \int_{M_1} |\nabla^2 u|^2 \; dv_g \right)^{\frac12}
    \left[2\left(\sqrt{\Lambda^2+2\lambda_1}+\Lambda\right)\eta(0) s\right]^{\frac12}.
\end{align*}
Obviously $\frac{1}{\sqrt{\Lambda^2+2\lambda_1}+\Lambda}\le\arctan\frac{1}{2\Lambda}$, so it follows 
\begin{equation}\label{eq15}
    \int_{M_1}|\nabla^2 u|^2 \; dv_g\ge n^2 s\eta(0)\frac{\left(1-(\sqrt{\Lambda^2+2\lambda_1}+\Lambda)s\right)^2}{2(\sqrt{\Lambda^2+2\lambda_1}+\Lambda)},
    \;\;\;\;\forall s \in \left[ 0\;, \; \frac{1}{\sqrt{\Lambda^2+2\lambda_1}+\Lambda} \right].
\end{equation}
Let
\begin{equation*}
    h(s)=s\left[1-(\sqrt{\Lambda^2+2\lambda_1}+\Lambda)s\right]^2, \;\;\; s \in \left[ 0\;,\;\frac{1}{\sqrt{\Lambda^2+2\lambda_1}+\Lambda} \right].
\end{equation*}
We have
\begin{equation*}
    \max_{s\in \left[0\;, \; \frac{1}{\sqrt{\Lambda^2+2\lambda_1}+\Lambda} \right]}h(s)=\frac{1}{3(\sqrt{\Lambda^2+2\lambda_1}+\Lambda)}\left( \frac23\right)^2.
\end{equation*}
Substituting into (\ref{eq15}) gives (\ref{eq11}).
\end{proof}

\begin{proof}[Proof of theorem1.2]
  According to proposition \ref{p2.2} and equation (\ref{eq7}), we obtain that
    \begin{align*}
        \left(2\lambda_1 - n\right)\int_{M_1} |\nabla u|^2 dv_g&\ge \int_{M_1} |\nabla^2 u|^2 dv_g \\
        &\ge \frac{2n^2}{27\left( \sqrt{\Lambda^2+2\lambda_1}+\Lambda\right)^2}\int_{M_1}|\nabla u|^2 dv_g\;.
    \end{align*}
   This shows that
   \begin{equation*}
   \lambda_1 \ge \frac{n}{2}+  \frac{n^2}{27\left( \sqrt{\Lambda^2+2\lambda_1}+\Lambda\right)^2}.
   \end{equation*}
   Noticing that $\lambda_1\le n$, the proof of Theorem 1.2 is completed.
   \end{proof}

\nocite{*}
\bibliographystyle{plain}
\bibliography{[1]}

\begin{thebibliography}{10}

\bibitem{[8]}
Simon Brendle.
\newblock Minimal surfaces in: a survey of recent results.
\newblock {\em Bulletin of Mathematical Sciences}, 3(1):133--171, 2013.

\bibitem{[7]}
Jaigyoung Choe and Marc Soret.
\newblock First eigenvalue of symmetric minimal surfaces in $\mathbb{S}^3$.
\newblock {\em Indiana University mathematics journal}, pages 269--281, 2009.

\bibitem{[2]}
Hyeong~In Choi and Ai~Nung Wang.
\newblock A first eigenvalue estimate for minimal hypersurfaces.
\newblock {\em Journal of differential geometry}, 18(3):559--562, 1983.

\bibitem{[1]}
Jonah~AJ Duncan, Yannick Sire, and Joel Spruck.
\newblock An improved eigenvalue estimate for embedded minimal hypersurfaces in the sphere.
\newblock {\em International Mathematics Research Notices}, 2024(18):12556--12567, 2024.

\bibitem{[10]}
Asun Jim{\'e}nez, Carlos~Tapia Chinchay, and Detang Zhou.
\newblock A lower bound for the first eigenvalue of a minimal hypersurface in the sphere, 2026.

\bibitem{[11]}
Robert~C Reilly.
\newblock {Applications of the Hessian operator in a Riemannian manifold}.
\newblock {\em Indiana University Mathematics Journal}, 26(3):459--472, 1977.

\bibitem{[3]}
James Simons.
\newblock Minimal varieties in riemannian manifolds.
\newblock {\em Annals of Mathematics}, 88(1):62--105, 1968.

\bibitem{[6]}
Zizhou Tang and Wenjiao Yan.
\newblock Isoparametric foliation and yau conjecture on the first eigenvalue.
\newblock {\em Journal of Differential Geometry}, 94(3):521--540, 2013.

\bibitem{[5]}
Paul~C Yang and Shing-Tung Yau.
\newblock {Eigenvalues of the Laplacian of compact Riemann surfaces and minimal submanifolds}.
\newblock {\em Annali della Scuola Normale Superiore di Pisa-Classe di Scienze}, 7(1):55--63, 1980.

\bibitem{[4]}
Shing-Tung Yau.
\newblock {\em Seminar on differential geometry}.
\newblock Number 102. Princeton University Press, 1982.

\bibitem{[9]}
Yuhang Zhao.
\newblock First eigenvalue of embedded minimal surfaces in $\mathbb{S}^3$, 2023.

\end{thebibliography}

\vspace{1cm}
\noindent
SCHOOL OF MATHEMATICAL SCIENCES, CAPITAL NORMAL UNIVERSITY \\
NO. 105 XISANHUAN ROAD, HAIDIAN DISTRICT, BEIJING \\
100048, CHINA \\
Email address: \texttt{2200501011@cnu.edu.cn}
\end{document}